\newtheorem{theorem}{Théorème}
\newtheorem{corollary}[theorem]{Corollaire}
\newtheorem{definition}[theorem]{Définition}
\newtheorem{example}[theorem]{Exemple}
\newtheorem{lemma}[theorem]{Lemme}
\newtheorem{proposition}[theorem]{Proposition}
\newtheorem{remark}[theorem]{Remarque}
\begin{document}

\begin{center}
\textsf{\LARGE{}Critère d'existence d'idempotent basé sur les algèbres
de Rétrocroisement}
\par\end{center}{\LARGE \par}

\begin{center}

\par\end{center}

\begin{center}
\textbf{Cristi\'{a}n Mallol} $^{a}$, \textbf{Richard Varro} $^{b}$\bigskip{}
 
\par\end{center}

$^{a}$ {\footnotesize{}Departamento de Ingener\'{i}a Matem\'{a}tica, Universidad
de La Frontera, Casilla 54-D, Temuco, Chile}{\small \par}

$^{b}$ {\footnotesize{}Institut de Mathématiques et de Modélisation de Montpellier,
Université de Montpellier II, 35095 Montpellier Cedex 5, France.}{\small \par}

$^{b}${\footnotesize{} Département de Mathématiques et Informatique Appliquées,
Université de Montpellier III, 34199 Montpellier Cedex 5, France}{\small \par}

\bigskip{}

\textbf{\small{}Abstract}{\small{} : We study the relationship of
backcrossing algebras with mutation algebras and algebras satisfying
$\omega$-polynomial identities: we show that in a backcrossing algebra
every element of weight 1 generates a mutation algebra and that for
any polynomial identity $f$ there is a backcrossing algebra satisfying
$f$. We give a criterion for the existence of idempotent in the case
of baric algebras satisfying a nonhomogeneous polynomial identity
and containing a backcrossing subalgebra. We give numerous genetic
interpretations of the algebraic results. }{\small \par}

\medskip{}

\emph{\small{}Key words}{\small{} : Backcrossing algebras, baric algebras,
mutation algebras, idempotent element, $\omega$-polynomial identities.}{\small \par}

\emph{\small{}2008 MSC}{\small{} : Primary : 17D92, Secondary : 92A10.}{\small \par}

\medskip{}

\section{Introduction}

Dans ce qui suit $K$ est un corps commutatif de caractéristique différente
de $2$ et $\left(A,\omega\right)$ est une $K$-algèbre commutative
pondérée, c'est-à-dire qu'il existe un morphisme d'algèbres non nul
$\omega:A\rightarrow K$. Pour $x\in A$, le scalaire $\omega\left(x\right)$
est appelé le poids de $x$, on note $H_{\omega}=\left\{ x\in A:\omega\left(x\right)=1\right\} $
l'hyperplan affine des éléments de $A$ qui sont de poids 1.

Soit $K\left\langle X\right\rangle $ (resp. $K\left[X\right]$) la
$K$-algèbre libre commutative non-associative (resp. associative)
des polynômes d'indéterminée $X$.

\'{E}tant donnés $\left(A,\omega\right)$ une $K$-algèbre pondérée
et $f=\sum_{i=1}^{m}f_{i}$ un polynôme de $K\left\langle X\right\rangle $
où $f_{i}\in K\left\langle X\right\rangle $ tels que $\mbox{deg}f_{i}\geq\mbox{deg\ensuremath{f_{i-1}}}$,
on dit que $f$ est homogène si $\mbox{deg}f_{i}=\mbox{deg\ensuremath{f}}$
pour tout $1\leq i\leq m$, sinon $f$ est dit non homogène. Et on
dit que l'algèbre $A$ vérifie l'identité $f$ si on a $\sum_{i=1}^{m}\omega\left(x\right)^{\text{deg }f-\text{ deg }f_{i}}f_{i}\left(x\right)=0$
pour tout $x\in A$. Dans le cas où $\mathrm{card}K>\mathrm{deg}f$,
l'algèbre $A$ vérifie l'identité $f\in K\left\langle X\right\rangle $
si et seulement si on a $f\left(y\right)=0$ pour tout $y\in H_{\omega}$.
Une algèbre vérifiant une identité $f\in K\left\langle X\right\rangle $
est dite $\omega$-polynomiale.
\begin{definition}
Une $K$-algèbre pondérée est de rétrocroisement si elle vérifie l'identité
$X^{2}X^{2}-2X^{3}+X^{2}$ et ne vérifie pas d'identité de degré $<4$. 
\end{definition}

\textbf{Interprétation génétique}. Le nom de cette algèbre provient
de son interprétation génétique. Dans une population, la loi de transmission
d'un type génétique s'interprète algébriquement par la donnée d'une
algèbre non associative pondérée $A$ (\cite{Lyub}, \cite{WB}).
Dans cette algèbre, à tout élément $x$ de poids $1$, correspond
une structure génétique de la population, c'est-à-dire une distribution
de fréquences. De plus si $x$ est la structure génétique de la génération
parentale $P$, alors le calcul de $x^{2}$ et $x^{2}x^{2}$ fournit
les structures génétiques respectivement de la première et de la seconde
génération issues de $P$, notées $F_{1}$ et $F_{2}$ (cf. \cite{WB},
p. 7). De manière analogue le calcul de $x^{3}$ fournit la structure
génétique de la population issue du croisement des générations $P$
et $F_{1}$, appelé le rétrocroisement (ou ``backcrossing''). Compte
tenu de ce qui précède, la relation 
\[
x^{3}=\frac{1}{2}\left(x^{2}x^{2}+x^{2}\right)
\]
 signifie que la structure génétique de la population issue du rétrocroisement
est la moyenne arithmétique des distributions génétiques des générations
$F_{1}$ et $F_{2}$.\medskip{}

Comme on l'a remarqué dans \cite{M-V-05}, cette algèbre est ubiquiste,
elle apparaît dans plusieurs situations : 
\begin{itemize}
\item comme identité vérifiée par toute algèbre de mutation (\cite{M-V-94},
remarque 2.8); 
\item en tant qu'algèbre commune aux deux familles issues de l'études des
algèbres $\omega$-monomiales de monôme directeur $x^{2}x^{2}$ (\cite{M-S-99},
remarque 3.6); 
\item comme identité invariante universelle par gamétisation des relations
de monôme directeur $x^{2}x^{2}$ (\cite{M-V-03}, prop. 22); 
\item comme relation assurant l'existence d'un idempotent dans les train
algèbres plénières de rang 4 (\cite{LabSua-07}) et de rang $n$ (\cite{B-H-10});
\item comme critère intervenant dans la partition de la classe des algèbres
qui vérifient l'identité $X^{2}X^{2}-X^{4}-\alpha X^{3}-\beta X^{2}+\left(\alpha+\beta\right)X$
(\cite{N-V-11,N-V-12}).\end{itemize}
\begin{proposition}
\label{prop:UnicPond}La pondération d'une algèbre de rétrocroisement
est unique.\end{proposition}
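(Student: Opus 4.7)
Mon plan est d'exploiter l'identit\'e d\'efinissant les alg\`ebres de r\'etrocroisement, appliqu\'ee simultan\'ement \`a deux pond\'erations hypoth\'etiques $\omega_1$ et $\omega_2$. Pour tout $x \in A$ et $i \in \{1, 2\}$, on dispose de
\[
x^2 x^2 - 2\omega_i(x)\, x^3 + \omega_i(x)^2\, x^2 = 0.
\]
La premi\`ere \'etape sera de soustraire ces deux relations ; la factorisation de $\omega_1(x)^2 - \omega_2(x)^2$ fournira alors, pour tout $x \in A$,
\[
(\omega_1(x) - \omega_2(x))\bigl[2\, x^3 - (\omega_1(x) + \omega_2(x))\, x^2\bigr] = 0. \qquad (\star)
\]

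Je raisonnerai ensuite par l'absurde en supposant $\omega_1 \neq \omega_2$ et en choisissant $w \in A$ avec $\omega_1(w) \neq \omega_2(w)$. La relation $(\star)$ imposera dans $A$ l'\'egalit\'e
\[
2 w^3 = \bigl(\omega_1(w) + \omega_2(w)\bigr)\, w^2,
\]
et l'\'etape d\'ecisive sera d'appliquer \`a cette identit\'e chacun des morphismes d'alg\`ebres $\omega_1$ puis $\omega_2$. En vertu de $\omega_i(w^k) = \omega_i(w)^k$, j'obtiendrai pour $i \in \{1, 2\}$ et $j$ l'indice distinct :
\[
\omega_i(w)^2 \bigl(\omega_i(w) - \omega_j(w)\bigr) = 0,
\]
et comme $\omega_1(w) - \omega_2(w) \neq 0$, ceci forcera $\omega_1(w) = \omega_2(w) = 0$, ce qui contredira le choix de $w$.

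Je ne pr\'evois pas de difficult\'e s\'erieuse : la factorisation est \'el\'ementaire et l'on n'utilise que le caract\`ere multiplicatif des pond\'erations, via la r\'eduction scalaire $\omega_i(w^3) = \omega_i(w)^3$. Le seul point \`a surveiller est de bien rendre $w$ compatible avec les deux applications de $\omega_i$, mais c'est imm\'ediat puisque la relation pr\'ec\'edente est une \'egalit\'e dans $A$. Je remarque au passage que l'hypoth\`ese selon laquelle $A$ ne v\'erifie pas d'identit\'e de degr\'e $<4$ n'intervient pas ici, l'identit\'e $X^2 X^2 - 2X^3 + X^2$ \`a elle seule garantissant d\'ej\`a l'unicit\'e de la pond\'eration.
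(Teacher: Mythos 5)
Votre argument est correct en lui-m\^eme, mais il d\'emontre un \'enonc\'e strictement plus faible que celui que l'article \'etablit et utilise ensuite. D\`es la premi\`ere ligne vous supposez que l'identit\'e $x^{2}x^{2}-2\omega_{i}(x)x^{3}+\omega_{i}(x)^{2}x^{2}=0$ vaut pour \emph{chacune} des deux pond\'erations, autrement dit que $(A,\omega_{1})$ et $(A,\omega_{2})$ sont toutes deux des alg\`ebres de r\'etrocroisement. Or une identit\'e non homog\`ene n'a de sens que relativement \`a une pond\'eration donn\'ee : si $(A,\omega)$ est de r\'etrocroisement et si $\eta$ est une autre pond\'eration de $A$ (c'est-\`a-dire un morphisme d'alg\`ebres non nul quelconque), rien ne garantit a priori que l'identit\'e soit aussi v\'erifi\'ee relativement \`a $\eta$ ; votre point de d\'epart pour $i=2$ n'est donc pas acquis. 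C'est pourtant la version forte --- toute pond\'eration de $A$ est \'egale \`a $\omega$ --- qui est d\'emontr\'ee dans l'article et qui est indispensable plus loin, dans la preuve de la proposition sur les train alg\`ebres principales, o\`u l'on identifie la pond\'eration de r\'etrocroisement de $K\left\langle x\right\rangle$ avec la restriction de $\omega$, dont on ne sait justement pas qu'elle r\'ealise l'identit\'e.

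La preuve de l'article contourne cette difficult\'e en appliquant la pond\'eration arbitraire $\eta$ \`a l'identit\'e \'ecrite relativement \`a $\omega$ seule : de $\left(x^{2}-\omega(x)x\right)^{2}=0$ on tire $\eta(x)^{2}\left(\eta(x)-\omega(x)\right)^{2}=0$, donc pour tout $x$ on a $\eta(x)=0$ ou $\eta(x)=\omega(x)$, puis un argument sur les noyaux et les hyperplans $H_{\omega}$ et $H_{\eta}$ permet de conclure $\eta=\omega$. Votre soustraction, la factorisation $(\star)$ et la contradiction finale sont correctes dans le cadre restreint que vous vous \^etes donn\'e, et votre remarque sur l'inutilit\'e de l'hypoth\`ese d'absence d'identit\'e de degr\'e $<4$ est exacte ; mais pour obtenir la proposition dans toute sa port\'ee il faut remplacer votre premi\`ere \'etape par l'application de la seconde pond\'eration \`a l'identit\'e relative \`a la premi\`ere, puis conclure comme ci-dessus.
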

\begin{proof}
Soient $\left(A,\omega\right)$ une algèbre de rétrocroisement et
$\eta$ une pondération de $A$. Alors pour tout $x\in A$, en appliquant
$\eta$ à $\left(x^{2}-\omega\left(x\right)x\right)^{2}=0$ on a $\eta\left(x\right)\left(\eta\left(x\right)-\omega\left(x\right)\right)=0$,
il en résulte que $H_{\eta}\subset H_{\omega}$ et $\ker\omega\subset\ker\eta$.
Soient $e\in H_{\eta}$ et $z\in\ker\eta$ on a $e+z\in H_{\eta}$
d'où $\omega\left(e\right)=1$ et $\omega\left(e+z\right)=1$, on
en déduit que $z\in\ker\omega$ et donc $\ker\eta\subset\ker\omega$.
Soit $x\in H_{\omega}$, on a $x^{2}-x\in\ker\omega=\ker\eta$ d'où
$\eta\left(x\right)\left(\eta\left(x\right)-1\right)=0$ on a $\eta\left(x\right)\neq0$
car $x\notin\ker\omega$, par conséquent $x\in H_{\eta}$ et donc
$H_{\omega}=H_{\eta}$. Il résulte de tout ceci que $\eta=\omega$.
\end{proof}

\section{Algèbres de rétrocroisement et algèbres de mutation}

Les algèbres de mutation et de rétrocroisement sont très liées. En
effet, d'après \cite{M-V-94} (remarque 2.8) toute algèbre de mutation
est de rétrocroisement. On va voir que toute algèbre de rétrocroisement
contient des algèbres de mutation.

On fera appel plusieurs fois au lemme suivant. 
\begin{lemma}
\label{baseK<x>}Soit $\left(A,\omega\right)$ une algèbre de rétrocroisement,
pour $x\in H_{\omega}$ et tout $k\geq1$ on pose $p_{k}=x^{k}-x^{k-1}$
où par convention $x^{0}=0$. La famille $\left(p_{k}\right)_{k\geq1}$
est une famille génératrice de $K\left\langle x\right\rangle $ telle
que $p_{1}^{2}=p_{1}+p_{2}$, $p_{1}p_{i}=p_{i+1}$ et $p_{i}p_{j}=0$
pour tout $i,j\geq2$. \end{lemma}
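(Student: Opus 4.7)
Le plan consiste \`a v\'erifier s\'epar\'ement les trois identit\'es $p_{1}^{2}=p_{1}+p_{2}$, $p_{1}p_{i}=p_{i+1}$ et $p_{i}p_{j}=0$ pour $i,j\geq2$, puis \`a en d\'eduire la propri\'et\'e g\'en\'eratrice. Les deux premi\`eres sont imm\'ediates : d'une part $p_{1}^{2}=x^{2}=(x^{2}-x)+x=p_{2}+p_{1}$ ; d'autre part, puisque $x^{k+1}=x\cdot x^{k}$ d\'esigne la puissance principale, on a $p_{1}p_{i}=x\cdot x^{i}-x\cdot x^{i-1}=x^{i+1}-x^{i}=p_{i+1}$ pour tout $i\geq2$.

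Le c\oe{}ur du lemme est la troisi\`eme identit\'e. Le cas de base $p_{2}^{2}=0$ se d\'eduit imm\'ediatement de l'identit\'e de r\'etrocroisement appliqu\'ee \`a $x\in H_{\omega}$ :
\[
p_{2}^{2}=(x^{2}-x)^{2}=x^{2}x^{2}-2x\cdot x^{2}+x^{2}=(2x^{3}-x^{2})-2x^{3}+x^{2}=0.
\]
Pour le cas g\'en\'eral, l'id\'ee est de polariser l'identit\'e de r\'etrocroisement en substituant $y=x+z\in H_{\omega}$ (avec $z\in\ker\omega$) \`a $x$. La composante homog\`ene de degr\'e $1$ en $z$ fournit la relation $p_{2}z=2p_{2}(xz)$, tandis que les termes bilin\'eaires obtenus par la substitution $y=x+z+w$ (avec $z,w\in\ker\omega$) conduisent apr\`es simplification \`a l'identit\'e
\[
(2xz-z)(2xw-w)=-2p_{2}(zw).
\]

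Il restera alors \`a propager la nullit\'e des produits par r\'ecurrence. En appliquant la relation lin\'eaire \`a $z=p_{j}$, on obtient $p_{2}p_{j}=2p_{2}p_{j+1}$ pour $j\geq2$, d'o\`u $p_{2}p_{j}=0$ pour tout $j\geq2$ en it\'erant \`a partir de $p_{2}^{2}=0$. Pour $i,j\geq3$, on applique l'identit\'e bilin\'eaire avec $z=p_{i-1}$ et $w=p_{j-1}$ : les produits $p_{i-1}p_{j-1}$, $p_{i-1}p_{j}$, $p_{i}p_{j-1}$ et $p_{2}(p_{i-1}p_{j-1})$ qui apparaissent s'annulent tous par hypoth\`ese de r\'ecurrence sur $i+j$, d'o\`u $4p_{i}p_{j}=0$ et donc $p_{i}p_{j}=0$ (la caract\'eristique \'etant diff\'erente de $2$). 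L'obstacle principal sera la gestion soigneuse de cette r\'ecurrence, en s'assurant que chaque terme interm\'ediaire est bien couvert par les cas d\'ej\`a trait\'es. Une fois les trois identit\'es acquises, le sous-espace engendr\'e par les $p_{k}$ est stable par multiplication et contient $x=p_{1}$ : c'est donc une sous-alg\`ebre de $A$ contenant $K\langle x\rangle$, l'inclusion r\'eciproque \'etant triviale.
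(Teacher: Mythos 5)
Votre preuve est correcte et suit essentiellement la m\^eme d\'emarche que celle de l'article : v\'erification directe des deux premi\`eres identit\'es, cas de base $p_{2}^{2}=0$, puis lin\'earisation de $\left(x^{2}-\omega\left(x\right)x\right)^{2}=0$ et r\'ecurrence sur $i+j$. Vos deux polarisations partielles (lin\'eaire en $z$ et bilin\'eaire en $z,w$ avec $z,w\in\ker\omega$) ne sont que des sp\'ecialisations de la forme compl\`etement lin\'earis\'ee $R\left(x,y,z,t\right)$ utilis\'ee dans l'article, appliqu\'ees aux m\^emes \'el\'ements $p_{k}$.
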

\begin{proof}
Soient $\left(A,\omega\right)$ une algèbre de rétrocroisement et
$x\in H_{\omega}$. La famille $\left(p_{k}\right)_{k\geq1}$ est
génératrice de $K\left\langle x\right\rangle $, car pour tout $k\geq1$
on a $x^{k}=\sum_{i=1}^{k}p_{i}$. Montrons que l'espace engendré
par la famille $\left(p_{k}\right)_{k\geq1}$ est une sous-algèbre
de $A$. Il est clair que $p_{1}^{2}=p_{1}+p_{2}$, $p_{1}p_{i}=p_{k+1}$
pour $i\geq2$. Montrons par récurrence sur l'entier $r\geq4$, que
l'on a $p_{i}p_{j}=0$ pour tout $i,j\geq2$ tels que $i+j=r.$

Ce résultat est vrai pour $r=4$ car l'identité de rétrocroisement
se traduit par $p_{2}^{2}=0$. Supposons que $p_{i}p_{j}=0$, pour
tout $i,j\geq2$ tels que $4\leq i+j\leq r$. Par linéarisation de
l'identité $\left(x^{2}-\omega\left(x\right)x\right)^{2}=0$ on obtient
la relation : 
\begin{equation}
R\left(x,y,z,t\right):\quad G\left(x,y\right)G\left(z,t\right)+G\left(x,z\right)G\left(y,t\right)+G\left(x,t\right)G\left(y,z\right)=0\label{LinRetro}
\end{equation}
 où $x$, $y$, $z$, $t\in A$ et $G\left(a,b\right)=2ab-\omega\left(a\right)b-\omega\left(b\right)a$.

Alors la relation $R\left(p_{1},p_{1},p_{k-1},p_{r-k}\right)$ où
$2\leq k\leq r-1$ s'écrit 
\[
4p_{2}\left(p_{k-1}p_{r-k}\right)+2\left(2p_{k}-p_{k-1}\right)\left(2p_{r-k+1}-p_{r-k}\right)=0
\]
 ce qui se simplifie avec l'hypothèse de récurrence en $p_{k}p_{r-k+1}=0$.
\end{proof}
\medskip{}

On rappelle qu'une algèbre de mutation \cite{M-V-94} est la donnée
d'un triplet $\left(V,M,\eta\right)$ où $V$ est un $K$-espace vectoriel,
$M$ un endomorphisme de $V$ et $\eta$ une forme linéaire non nulle
sur $V$ tels que $\eta\circ M=\eta$ et $V$ muni de la structure
d'algèbre $xy=\frac{1}{2}\left[\eta\left(y\right)M(\left(x\right)+\eta\left(x\right)M\left(y\right)\right]$.
On en déduit sans peine que $\eta$ est une pondération et que $M\neq0$.\medskip{}

\textbf{Interprétation génétique}. Les algèbres de mutation apparaîssent
dans \cite{Lyub80}, on a montré dans \cite{M-V-00} qu'elles modèlisent
des populations subdivisées en plusieurs colonies entre lesquelles
ont lieu des migrations dont les taux dépendent des types génétiques
et à l'intérieur desquelles se produisent des mutations génétiques
avec des taux qui dépendent des colonies. 
\begin{theorem}
\label{Mut&poids1}Dans une algèbre de rétrocroisement $\left(A,\omega\right)$,
tout élément de poids 1 engendre une algèbre de mutation pondérée
par $\omega$. \end{theorem}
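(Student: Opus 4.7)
The plan is to exhibit explicitly, for any $x\in H_{\omega}$, a mutation structure on the subalgebra $V=K\left\langle x\right\rangle $ using the description of $V$ furnished by the preceding lemma. Since $\omega\left(x\right)=1$, the natural candidate for the weight form is the restriction $\eta=\omega_{\mid V}$, which is nonzero; it remains to produce a suitable endomorphism $M$.

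First I would define $M:V\rightarrow V$ by the formula
\[
M\left(u\right)=2xu-\omega\left(u\right)x^{2},\qquad u\in V.
\]
This is manifestly $K$-lin�aire et pr�serve $V$; � l'aide du lemme pr�c�dent on calcule $M\left(p_{1}\right)=M\left(x\right)=2x^{2}-x^{2}=x^{2}=p_{1}+p_{2}$ et $M\left(p_{k}\right)=2xp_{k}=2p_{k+1}$ pour $k\geq2$. Cela montre en passant que $M\neq0$, car $x^{2}\neq0$ (sinon $0=\omega\left(x^{2}\right)=\omega\left(x\right)^{2}=1$). La condition $\eta\circ M=\eta$ est imm�diate :
\[
\omega\left(M\left(u\right)\right)=2\omega\left(x\right)\omega\left(u\right)-\omega\left(u\right)\omega\left(x\right)^{2}=\omega\left(u\right).
\]

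Le c\oe ur de la preuve consistera � v�rifier que la structure d'alg�bre de $V$ co�ncide avec celle de l'alg�bre de mutation $\left(V,M,\eta\right)$, c'est-�-dire l'identit�
\[
uv=\tfrac{1}{2}\left[\omega\left(v\right)M\left(u\right)+\omega\left(u\right)M\left(v\right)\right]\quad\text{pour tous }u,v\in V.
\]
En rempla�ant $M$ par sa d�finition et en simplifiant, cela �quivaut � l'identit� bilin�aire
\[
uv=\omega\left(v\right)xu+\omega\left(u\right)xv-\omega\left(u\right)\omega\left(v\right)x^{2}.
\]
Par bilin�arit� des deux membres, il suffit de la tester sur la famille g�n�ratrice $\left(p_{k}\right)_{k\geq1}$ fournie par le lemme, ce qui ne laisse que trois cas : $u=v=p_{1}$; $u=p_{1}$, $v=p_{i}$ avec $i\geq2$; et $u=p_{i}$, $v=p_{j}$ avec $i,j\geq2$. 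Chacun se r�duit � un calcul direct utilisant les produits $p_{1}^{2}=p_{1}+p_{2}$, $p_{1}p_{i}=p_{i+1}$ et $p_{i}p_{j}=0$ pour $i,j\geq2$.

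Le principal obstacle est essentiellement comptable : aucune difficult� conceptuelle n'appara�t une fois le lemme en main, seule la v�rification de l'identit� de mutation sur les g�n�rateurs est n�cessaire. Le point qui demande un peu d'intuition est le choix de $M$, mais il est en fait forc� par le cas $u=v=x$ de l'identit� de mutation, qui impose $M\left(x\right)=x^{2}$.
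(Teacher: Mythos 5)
Your proof is correct, and it rests on the same key lemma as the paper's argument (Lemma \ref{baseK<x>}, i.e.\ the products $p_{1}^{2}=p_{1}+p_{2}$, $p_{1}p_{i}=p_{i+1}$, $p_{i}p_{j}=0$), but the way you conclude is genuinely different. The paper observes that the span of $\left\{ p_{k}\right\} _{k\geq2}$ is an ideal of square zero of $K\left\langle x\right\rangle $ and then invokes the characterization of mutation algebras from \cite{M-V-94} to conclude, only afterwards recording the endomorphism $M$ on the generators $p_{k}$, with a separate case in finite dimension where $M\left(p_{n}\right)$ has to be re-expanded in the basis. You bypass the external reference entirely: the closed formula $M\left(u\right)=2xu-\omega\left(u\right)x^{2}$ (correctly identified as forced by the mutation identity at $v=x$) is manifestly a well-defined endomorphism of the subalgebra $V=K\left\langle x\right\rangle $ satisfying $\omega\circ M=\omega$, and since the mutation identity is bilinear it suffices to check it on the generating family $\left(p_{k}\right)_{k\geq1}$, where it reduces to the three products of the lemma; each of the three cases does check out. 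This is self-contained, and it also removes the dimension dichotomy of the paper's description of $M$, since your formula gives $M\left(p_{n}\right)=2p_{n+1}$, which in finite dimension is automatically the correct linear combination of $p_{2},\ldots,p_{n}$. The only thing worth adding for completeness is the explicit one-line computation of each of the three cases, but they are as routine as you claim.
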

\begin{proof}
Soient $\left(A,\omega\right)$ une algèbre de rétrocroisement et
$x\in H_{\omega}$. D'après le lemme \ref{baseK<x>}, le sous-espace
engendré par $\left\{ p_{k}\right\} _{k\geq2}$ où $p_{k}=x^{k}-x^{k-1}$
est un idéal nilpotent d'indice 2 de la la sous-algèbre $K\left\langle x\right\rangle $
engendrée par $x$, donc d'après \cite{M-V-94}, $K\left\langle x\right\rangle $
est de mutation. De plus la structure d'algèbre de mutation de $K\left\langle x\right\rangle $
est $\left(K\left\langle x\right\rangle ,M,\omega\right)$ où $\omega$
est la pondération de $A$ et $M$ l'endomorphisme de $K\left\langle x\right\rangle $
défini selon la dimension de $K\left\langle x\right\rangle $ par
$M\left(p_{1}\right)=p_{2}+p_{1}$, $M\left(p_{k}\right)=2p_{k+1}$
pour $k\geq2$ si $K\left\langle x\right\rangle $ est de dimension
infinie et $M\left(p_{k}\right)=2p_{k+1}$ pour $2\leq k\leq n-1$,
$M\left(p_{n}\right)=2\sum_{i=2}^{n}\alpha_{i}p_{i}$ si $K\left\langle x\right\rangle $
est de dimension finie $n$. 
\end{proof}
Dans une algèbre commutative $A$ on définit usuellement deux types
de puissances pour un élément $x\in A$ : les puissances principales
définies par $x^{1}=x$ et $x^{k+1}=xx^{k}$ et les puissances plénières
définies par $x^{\left[1\right]}=x$ et $x^{\left[k+1\right]}=x^{\left[k\right]}x^{\left[k\right]}$
pour tout entier $k\geq1$. Dans les algèbres de rétrocroisement il
existe des relations entre ces puissances. 
\begin{corollary}
Dans une algèbre de rétrocroisement $\left(A,\omega\right)$, pour
tout $x\in H_{\omega}$ et tout entier $k\geq1$ on a : 
\[
x^{k+1}=\frac{1}{2^{k-1}}x^{\left[k+1\right]}+\sum_{i=1}^{k-1}\frac{1}{2^{i}}x^{\left[i+1\right]}\quad et\quad x^{\left[k+1\right]}=2^{k-1}x^{k+1}-\sum_{i=2}^{k}2^{i-2}x^{i}.
\]
 \end{corollary}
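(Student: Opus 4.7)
Fix $x \in H_\omega$ and work in the subalgebra $K\langle x\rangle$ using the basis $(p_k)_{k\geq 1}$ from Lemma~\ref{baseK<x>} together with its multiplication rules. The backbone of the argument is the pair of closed forms
\[
x^{[k+1]} = p_{1} + \sum_{i=2}^{k+1} 2^{i-2}\, p_i, \qquad x^{k+1} = \sum_{i=1}^{k+1} p_i,
\]
where the second equality is immediate from $p_i = x^i - x^{i-1}$, so all the real work lies in the first.

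To prove the expansion of $x^{[k+1]}$, I proceed by induction on $k \geq 0$. The base case $k=0$ reads $x^{[1]} = p_{1} = x$. For the inductive step, squaring $x^{[k]} = p_{1} + \sum_{i=2}^{k} 2^{i-2} p_i$ and applying the three rules $p_{1}^{2} = p_{1} + p_{2}$, $p_{1} p_i = p_{i+1}$ for $i\geq 2$, and $p_i p_j = 0$ for $i,j\geq 2$, all products involving two $p_{\geq 2}$ factors vanish, and what survives is $p_{1} + p_{2} + \sum_{i=2}^{k} 2^{i-1}\, p_{i+1}$, which matches the claim at rank $k+1$ after re-indexing.

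With both closed forms in hand, each identity of the corollary becomes a coefficient match in the basis $(p_m)$. For the second formula, I compute the coefficient of $p_m$ in $2^{k-1} x^{k+1} - \sum_{i=2}^{k} 2^{i-2} x^i$ using the partial geometric identity $\sum_{i=m}^{k} 2^{i-2} = 2^{k-1} - 2^{m-2}$; splitting into the regimes $m = 1$, $2\leq m\leq k$, and $m = k+1$ yields coefficients $1$, $2^{m-2}$, and $2^{k-1}$ respectively, which are exactly the coefficients of $x^{[k+1]}$ given by Step~1. The first formula is handled the same way, using $\sum_{i=1}^{k-1} 2^{-i} = 1 - 2^{-(k-1)}$ and the partial sums $\sum_{i=m-1}^{k-1} 2^{-i} = 2^{-(m-2)} - 2^{-(k-1)}$; alternatively one can solve the second formula for $x^{k+1}$ and substitute recursively.

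The only obstacle is bookkeeping, namely keeping the three index regimes and the geometric sums straight; no new algebraic ingredient beyond Lemma~\ref{baseK<x>} is required, since that lemma already concentrates all the structural content of the backcrossing identity into the vanishing of the $p_{\geq 2}$-products.
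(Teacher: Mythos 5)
Your proof is correct, but it takes a more hands-on route than the paper. The paper deduces the corollary from Theorem \ref{Mut&poids1}: it invokes the mutation-algebra endomorphism $M$ of $K\left\langle x\right\rangle$ (with $z^{2}=\omega\left(z\right)M\left(z\right)$) and states, as an easy verification, that $x^{\left[k+1\right]}=M^{k}\left(x\right)$, $M^{k}\left(x\right)=2^{k-1}x^{k+1}-\sum_{i=2}^{k}2^{i-2}x^{i}$ and $x^{k+1}=\left[\frac{1}{2^{k-1}}M^{k}+\sum_{i=1}^{k-1}\frac{1}{2^{i}}M^{i}\right]\left(x\right)$, from which both formulas drop out. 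You instead bypass $M$ entirely and work straight from Lemma \ref{baseK<x>}, proving by induction the closed form $x^{\left[k+1\right]}=p_{1}+\sum_{i=2}^{k+1}2^{i-2}p_{i}$ (which is in fact exactly $M^{k}\left(p_{1}\right)$ for the $M$ of the paper's Theorem \ref{Mut&poids1}) and then checking that both sides of each identity expand to the same linear combination of the $p_{m}$. Your version is more self-contained and actually supplies the "on v\'erifie ais\'ement" computation that the paper omits; the paper's version is shorter and reuses the operator formalism that it needs again in Section 3 (relations (\ref{eq:puissances})). One small caution: Lemma \ref{baseK<x>} only asserts that $\left(p_{k}\right)_{k\geq1}$ is a \emph{generating} family, not a basis, so in finite dimension "matching coefficients" is not a two-way street; your argument is nonetheless sound because you only use the forward direction, namely that two expressions computed as identical linear combinations of the $p_{m}$ are equal as elements of the algebra. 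It would be worth rephrasing to avoid the word "basis".
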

\begin{proof}
Soit $x\in H_{\omega}$, d'après le théorème \ref{Mut&poids1} la
sous-algèbre $K\left\langle x\right\rangle $ est de mutation, il
existe donc un endomorphisme $M$ de $K\left\langle x\right\rangle $
tel que pour tout $z\in K\left\langle x\right\rangle $ on a $z^{2}=\omega\left(z\right)M\left(z\right)$.
Or, pour tout $k\geq1$ on vérifie aisément que $M^{k}\left(x\right)=2^{k-1}x^{k+1}-\sum_{i=2}^{k}2^{i-2}x^{i}$,
$x^{k+1}=\left[\frac{1}{2^{k-1}}M^{k}+\sum_{i=1}^{k-1}\frac{1}{2^{i}}M^{i}\right]\left(x\right)$
et $x^{\left[k+1\right]}=M^{k}\left(x\right)$ d'où le résultat. 
\end{proof}

\textbf{Interprétation génétique}. Génétiquement ces relations signifient
que dans une population vérifiant la relation de rétro-croisement,
si $x$ est la distribution des fréquences des gènes dans la génération
parentale, la structure génétique du $k$-ième rétrocroisement donnée
par $x^{k+1}$ est une moyenne des distributions des générations $F_{1}$,
\ldots{} , $F_{k}$ et la distribution de la génération $F_{k}$
est connue si l'on connaît celles des distributions des $k$ premiers
rétrocroisements.

\section{Algèbres de rétrocroisement et algèbres $\omega$-polynomiales}

Comme on l'a remarqué dans \cite{M-V-05} les algèbres de rétrocroisement
sont ubiquistes. En effet, comme le montre le résultat qui suit, on
en trouve dans toute algèbre pondérée vérifiant une identité polynomiale. 
\begin{proposition}
Pour tout $f\in K\left\langle X\right\rangle $ tel que $f=\sum_{i=1}^{m}\alpha_{i}f_{i}$
avec $m\geq2$ et $\sum_{i=1}^{m}\alpha_{i}=0$, il existe une algèbre
de rétrocroisement qui vérifie $f$. \end{proposition}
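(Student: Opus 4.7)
L'id\'ee est d'exploiter le lien entre alg\`ebres de r\'etrocroisement et alg\`ebres de mutation d\'ej\`a discut\'e au Th\'eor\`eme~\ref{Mut&poids1} : puisque toute alg\`ebre de mutation est de r\'etrocroisement (\cite{M-V-94}), il suffit de construire une alg\`ebre de mutation $(V,M,\omega)$ qui v\'erifie $f$ et qui ne v\'erifie aucune identit\'e de degr\'e $<4$.

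Dans une alg\`ebre de mutation, la formule $x^{k+1} = \frac{1}{2^{k-1}} M^k(x) + \sum_{i=1}^{k-1} \frac{1}{2^i} M^i(x)$ valable pour $x \in H_\omega$ (vue au Corollaire pr\'ec\'edent) entra\^ine que tout mon\^ome $g$ de degr\'e $d$ en $X$ se calcule par $g(x) = P_g(M)(x)$ pour un polyn\^ome $P_g \in K[Y]$ de degr\'e au plus $d-1$, avec $P_g(1) = g(e)$ lorsque $e \in H_\omega$ est idempotent. Sous l'hypoth\`ese naturelle que chaque $f_i$ est un mon\^ome unitaire, on a $P_{f_i}(1) = 1$, et l'hypoth\`ese $\sum_i \alpha_i = 0$ \'equivaut \`a $P_f(1) = 0$ o\`u $P_f = \sum_i \alpha_i P_{f_i}$.

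Je prends alors $V = Ke \oplus W$ avec $\omega(e) = 1$, $\omega|_W = 0$, $M(e) = e$, et $M|_W$ un op\'erateur lin\'eaire dont le polyn\^ome minimal divise $P_f$ et est de degr\'e au moins $3$ (le facteur $(Y-1)$ de $P_f$ se trouve absorb\'e par le bloc $Ke$, et il reste dans $P_f$ un diviseur de degr\'e suffisant d\`es que $\deg f \geq 4$). L'alg\`ebre de mutation ainsi d\'efinie satisfait $P_f(M) = 0$ sur $V$ tout entier, puisque $P_f(M)(e) = P_f(1)\, e = 0$ et $P_f(M)|_W = 0$ par construction, donc l'identit\'e $f$ y est v\'erifi\'ee. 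De plus, l'ind\'ependance lin\'eaire de $\{I, M, M^2\}$ sur $V$ qui d\'ecoule du choix du polyn\^ome minimal interdit toute identit\'e $\omega$-polynomiale de degr\'e $\leq 3$, et garantit donc que $(V,M,\omega)$ est strictement de r\'etrocroisement.

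Le principal obstacle sera le cas d\'eg\'en\'er\'e o\`u $P_f$ ne poss\`ede pas de diviseur de degr\'e $\geq 3$ (par exemple lorsque $P_f$ est proportionnel \`a $(Y-1)$, ce qui correspond \`a des $f$ de degr\'e trop petit pour \^etre compatibles avec la d\'efinition stricte des alg\`ebres de r\'etrocroisement) : on y r\'epond en enrichissant la construction par un facteur auxiliaire issu du Lemme~\ref{baseK<x>}, choisi de fa\c{c}on \`a \'elever le degr\'e du polyn\^ome minimal de $M$ sans perturber l'identit\'e $f$.
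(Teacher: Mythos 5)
Votre d\'emarche est essentiellement celle du papier : ramener la question aux alg\`ebres de mutation, y traduire $f$ en un polyn\^ome $P_f$ en l'op\'erateur $M$ (gr\^ace aux formules de puissances et \`a $M^p(x)M^q(x)=\frac{1}{2}[M^{p+1}(x)+M^{q+1}(x)]$ --- cette derni\`ere est indispensable pour traiter les mon\^omes non principaux, pensez \`a la citer), observer que $\sum_i\alpha_i=0$ donne $P_f(1)=0$, puis r\'ealiser $M$ par une construction de type matrice compagnon annulant $P_f$; le papier prend $V=K^n$ avec la matrice compagnon de $P$ et une pond\'eration ad hoc l\`a o\`u vous prenez $Ke\oplus W$ avec $M(e)=e$, ce qui revient au m\^eme. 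Votre cas non d\'eg\'en\'er\'e ($\deg P_f\geq3$) est correct et m\^eme plus soigneux que le papier sur la clause \guillemotleft{} aucune identit\'e de degr\'e $<4$ \guillemotright{}.

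Le seul point r\'eellement en suspens est le cas d\'eg\'en\'er\'e que vous reportez. Si $P_f=0$, il n'y a rien \`a absorber : n'importe quelle alg\`ebre de mutation v\'erifie $f$, et il suffit de prendre $M$ de polyn\^ome minimal de degr\'e $\geq3$ (le papier exhibe la permutation cyclique sur $K^n$); votre \guillemotleft{} facteur auxiliaire \guillemotright{} n'est donc qu'une reformulation de cela et se pr\'ecise sans peine. En revanche, si $P_f\neq0$ avec $\deg P_f\leq2$, aucun enrichissement ne peut fonctionner : toute alg\`ebre de mutation v\'erifiant $f$ satisfait $P_f(M)=0$ sur $H_\omega$, donc sur $V$ tout entier, et retombe sur une identit\'e de degr\'e $\leq3$; pire, la combinaison avec $X^2X^2-2X^3+X^2$ montre qu'aucune alg\`ebre de r\'etrocroisement au sens strict ne peut v\'erifier un tel $f$ (exemple : $f=X^2X^2-X^2$ force $x^3=x^2$). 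Ce n'est pas un d\'efaut propre \`a votre preuve --- celle du papier passe ce point sous silence en invoquant seulement \guillemotleft{} mutation $\Rightarrow$ r\'etrocroisement \guillemotright{} --- mais votre promesse de corriger ce cas par un facteur auxiliaire ne peut pas \^etre tenue telle quelle; il faut soit exclure ces $f$, soit lire \guillemotleft{} r\'etrocroisement \guillemotright{} au sens faible de la seule identit\'e $X^2X^2-2X^3+X^2$.
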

\begin{proof}
Dans \cite{N-V-11a} on a montré que pour tout polynôme non homogène
$f\in K\left\langle X\right\rangle $ il existe une algèbre de mutation
qui vérifie $f$. Ce résultat subsiste si $f$ est homogène, soit
$f=\sum_{i=1}^{m}\alpha_{i}f_{i}\in K\left\langle X\right\rangle $
un polynôme homogène tel que $\sum_{i=1}^{m}\alpha_{i}=0$. Comme
$f$ est homogène et $m\geq2$ on a $\mbox{deg}f\geq4$. Si $\left(V,M,\omega\right)$
est une algèbre de mutation, pour $x\in H_{\omega}$ et tout entier
$p,q\geq1$ on a 
\begin{eqnarray}
x^{\left[p+1\right]}=M^{p}\left(x\right), &  & x^{p+1}=\left[\frac{1}{2^{p-1}}M^{p}+\sum_{k=1}^{p-1}\frac{1}{2^{k}}M^{k}\right]\left(x\right),\label{eq:puissances}\\
M^{p}\left(x\right)M^{q}\left(x\right) & = & \frac{1}{2}\left[M^{p+1}\left(x\right)+M^{q+1}\left(x\right)\right],
\end{eqnarray}
en utilisant ces relations, il existe $\beta_{1},\ldots,\beta_{n}\in K$
tels que $f\left(x\right)=\sum_{k=1}^{n}\beta_{k}M^{k}\left(x\right)$
où $n\leq m$. On a deux cas. 

-- Si $\beta_{k}=0$ pour tout $1\leq k\leq n$ (voir par exemple
$f\left(X\right)=X^{3}X^{3}-\left(X^{2}X^{2}\right)X^{2}$), dans
ce cas on considère l'espace $V=K^{n}$, les endomorphismes $M$ et
$\eta$ définis sur la base canonique $\left(v_{0},\ldots,v_{n-1}\right)$
de $V$ par $M\left(v_{i}\right)=v_{i+1}$ si $i<n-1$, $M\left(v_{n-1}\right)=v_{0}$
et $\eta\left(v_{i}\right)=1$ pour tout $0\leq i<n$, il est clair
que $\left(V,M,\eta\right)$ est une algèbre de mutation vérifiant
l'identité $f$. 

-- Si $\beta_{1},\ldots,\beta_{n}\in K$ ne sont pas tous nuls, de
$\mbox{deg}f\geq4$ il résulte $n\geq2$ et $\beta_{n}\neq0$, de
$\sum_{i=1}^{m}\alpha_{i}=0$ il découle $\sum_{k=1}^{n}\beta_{k}=0$.
Si l'on pose $P\left(X\right)=\sum_{k=1}^{n}\beta_{n}^{-1}\beta_{k}X^{k}$
on peut écrire l'identité $f$ sous la forme $f\left(x\right)=\beta_{n}P\left(M\right)\left(x\right)$.
Alors, en prenant $V=K^{n}$ et $M$ définie sur la base canonique
$\left(v_{0},\ldots,v_{n-1}\right)$ de $V$ par la matrice compagnon
du polynôme $P$, on munit $V$ de la pondération $\eta\left(v_{n-1}\right)=1$
et $\ker\eta=K\left\langle v_{0},\ldots,v_{n-2}\right\rangle $, alors
de $M\left(v_{k}\right)=v_{k+1}$ pour $0\leq k\leq n-1$ et $M\left(v_{n-1}\right)=\sum_{k=1}^{n-2}\beta_{n}^{-1}\beta_{k}v_{k}$
on a $\eta\circ M=\eta$. Avec tout ceci on montre sans peine que
l'algèbre de mutation $\left(V,M,\eta\right)$ vérifie $f$. 

Dans les deux cas il existe une algèbre de mutation vérifiant l'identité
$f$, or une algèbre de mutation est de rétrocroisement.\end{proof}
\begin{proposition}
Soient $\left(A,\omega\right)$ une algèbre de rétrocroisement et
$x\in H_{\omega}$, alors pour tout $f$, $g\in K\left\langle X\right\rangle $
tels que $f\left(x\right)$, $g\left(x\right)\in\ker\omega$ on a
$f\left(x\right)g\left(x\right)=0$. \end{proposition}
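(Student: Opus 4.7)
Le plan est de ramener tout le calcul � l'int�rieur de la sous-alg�bre $K\langle x\rangle$ et d'y exploiter la structure multiplicative rigide fournie par le lemme \ref{baseK<x>}. Puisque $f(x)$ et $g(x)$ appartiennent automatiquement � $K\langle x\rangle$, toute l'analyse peut se faire sur les g�n�rateurs $p_k=x^{k}-x^{k-1}$.

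Je commencerais par une observation de poids : $\omega(p_1)=1$ et $\omega(p_k)=\omega(x)^{k}-\omega(x)^{k-1}=0$ pour tout $k\geq 2$, car $x\in H_\omega$. Comme $(p_k)_{k\geq 1}$ engendre $K\langle x\rangle$, tout �l�ment $h(x)$ de $K\langle x\rangle$ s'�crit $h(x)=\alpha p_1+n$ avec $n\in N:=\mathrm{vect}\{p_k:k\geq 2\}$. L'application de $\omega$ � cette �criture donne $\omega(h(x))=\alpha$, si bien que la composante $\alpha$ est uniquement d�termin�e et vaut le poids de $h(x)$. Les hypoth�ses $f(x),g(x)\in\ker\omega$ entra�nent alors $f(x),g(x)\in N$.

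Pour conclure, j'invoquerais la relation $p_ip_j=0$ pour tous $i,j\geq 2$ du lemme \ref{baseK<x>}, qui signifie exactement que $N\cdot N=\{0\}$ : c'est le fait, d�j� utilis� dans la preuve du th�or�me \ref{Mut&poids1}, que $N$ est un id�al nilpotent d'indice $2$ de $K\langle x\rangle$. En d�veloppant $f(x)=\sum_{i\geq 2}a_i p_i$ et $g(x)=\sum_{j\geq 2}b_j p_j$, il s'ensuit directement
\[
f(x)g(x)=\sum_{i,j\geq 2}a_ib_j\,p_ip_j=0.
\]

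Il n'y a pas v�ritablement d'obstacle dans cette preuve : tout le travail combinatoire est d�j� absorb� par le lemme \ref{baseK<x>}. La seule micro-v�rification � surveiller est l'identification $\ker\omega\cap K\langle x\rangle=N$, qui se fait par la simple application de $\omega$ d�crite ci-dessus.
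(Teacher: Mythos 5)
Votre preuve est correcte et suit essentiellement la m\^eme voie que celle de l'article : celui-ci invoque la structure d'alg\`ebre de mutation de $K\left\langle x\right\rangle $ (th\'eor\`eme \ref{Mut&poids1}), dont la formule du produit annule imm\'ediatement le produit de deux \'el\'ements de poids nul, tandis que vous remontez directement au lemme \ref{baseK<x>} via l'identification $\ker\omega\cap K\left\langle x\right\rangle =\mathrm{vect}\left\{ p_{k}:k\geq2\right\} $ et les relations $p_{i}p_{j}=0$ pour $i,j\geq2$. Les deux arguments reposent sur le m\^eme fait structurel (cet id\'eal est de carr\'e nul), et votre r\'edaction, y compris la v\'erification que le coefficient de $p_{1}$ vaut le poids m\^eme si la famille $\left(p_{k}\right)$ n'est que g\'en\'eratrice, est compl\`ete.
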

\begin{proof}
Soit $x\in H_{\omega}$, pour tout $f$, $g\in K\left\langle X\right\rangle $
on a $f\left(x\right)$, $g\left(x\right)\in K\left\langle x\right\rangle $,
alors en utilisant la structure d'algèbre de mutation de $K\left\langle x\right\rangle $,
si on a $f\left(x\right)$, $g\left(x\right)\in\ker\omega$ on a aussitôt
que $f\left(x\right)g\left(x\right)=0$. 
\end{proof}

\textbf{Interprétation génétique}. En particulier, pour $f\left(X\right)=g\left(X\right)=X^{\left[i\right]}-X^{\left[j\right]}$
où $i$, $j\geq1$ on a : 
\[
\left(x^{\left[i\right]}-x^{\left[j\right]}\right)^{2}=0,\forall x\in H_{\omega}.
\]
Ce résultat a une interprétation génétique surprenante : si dans une
population la structure génétique de la génération parentale $P$
(notée aussi $F_{0}$) vérifie la relation de rétrocroisement, c'est-à-dire
que la distribution des fréquences des gènes dans la population obtenue
par rétrocroisement de la $F_{1}$ avec $P$ est la moyenne des distributions
de fréquences des générations $F_{1}$ et $F_{2}$, alors cette relation
est vérifiée quelles que soient les générations $F_{i-1}$ et $F_{j-1}$
considérées. Formellement on a $F_{i-1}\times F_{j-1}=\frac{1}{2}\left(F_{i}+F_{j}\right)$
pour tout $i,j\geq1$.\medskip{}

De plus dans toute algèbre de rétrocroisement on a : 
\begin{theorem}
Soient $\left(A,\omega\right)$ une algèbre de rétrocroisement de
dimension finie et $S$ une partie au plus dénombrable de $K\left\langle X\right\rangle $
telle que $\mathrm{\mbox{card}}\left(S\right)\geq\dim A$. Alors pour
tout $x\in H_{\omega}$ il existe $f\in Lin\left(S\right)$ tel que
l'algèbre $K\left\langle x\right\rangle $ vérifie $f$.\end{theorem}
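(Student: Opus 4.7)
Fix $x\in H_{\omega}$ and set $V:=K\langle x\rangle$. The strategy is to reduce the a priori quantitative condition ``$V$ satisfies $f$'' to the single scalar equation ``$f(x)=0$'', after which the conclusion follows from a routine dimension count.

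By Theorem~\ref{Mut&poids1}, $(V,M,\omega)$ is a mutation algebra. First I would show that $V$ is a cyclic $K[M]$-module generated by $x$. With the $(p_k)_{k\geq 1}$ of Lemma~\ref{baseK<x>}, applying the mutation product $uv=\tfrac{1}{2}(\omega(u)M(v)+\omega(v)M(u))$ to $p_1^{2}=p_1+p_2$ and $p_1 p_k=p_{k+1}$ for $k\geq 2$ yields $p_2=(M-\mathrm{id})(x)$ and $p_{k+1}=\tfrac{1}{2}M(p_k)$; a short induction then places every $p_k$ in $K[M]\cdot x$.

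The main technical step is the following claim: for every $f\in K\langle X\rangle$ there exists a polynomial $Q_f\in K[t]$, depending only on $f$, such that $f(y)=Q_f(M)(y)$ for every $y\in H_{\omega}\cap V$. I would prove this by induction on the non-associative tree of $f$. The base case $f=X$ is trivial. For a product $f=f_1 f_2$, the induction hypothesis combined with $\omega(f_j(y))=\omega(y)^{\deg f_j}=1$ on $H_{\omega}$ turns the mutation law into $f(y)=\tfrac{1}{2}M\bigl(Q_{f_1}(M)+Q_{f_2}(M)\bigr)(y)$, so one may take $Q_f=\tfrac{1}{2}t(Q_{f_1}+Q_{f_2})$. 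Linear combinations and inhomogeneous $f$'s are handled by linearity, since on $H_{\omega}$ all weight corrections in the definition of ``satisfies $f$'' equal $1$.

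Combining these two steps, and using the introduction's equivalence between ``satisfies $f$'' and ``$f(y)=0$ for every $y\in H_{\omega}$'' when $\mathrm{card}\,K>\deg f$, ``$V$ satisfies $f$'' becomes equivalent to $Q_f(M)(y)=0$ for all $y\in H_{\omega}\cap V$. Writing an arbitrary such $y$ as $x+z$ with $z\in\ker\omega\cap V$ and using the linearity of $Q_f(M)$ together with $Q_f(M)(0)=0$, this forces $Q_f(M)=0$ on $V$; by cyclicity of $V=K[M]\cdot x$, this last condition is in turn equivalent to $Q_f(M)(x)=f(x)=0$.

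It therefore suffices to exhibit a non-zero $f\in\mathrm{Lin}(S)$ with $f(x)=0$. Consider the evaluation map $\Phi:\mathrm{Lin}(S)\to V$, $g\mapsto g(x)$, whose image lies inside $V$, so $\dim\mathrm{Im}\,\Phi\leq\dim V\leq\dim A\leq\mathrm{card}\,S$. A short case analysis---according to whether $S$ is linearly independent in $K\langle X\rangle$ or not---then produces a non-trivial element of $\ker\Phi$.

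I expect the main obstacle to be the inductive construction of $Q_f$; once it is in hand, the theorem collapses to the classical fact that the annihilator of a cyclic $K[M]$-module equals the annihilator of any of its generators, and a one-line dimension count finishes the argument.
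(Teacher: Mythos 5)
Your proposal follows essentially the same route as the paper: pass to the mutation structure of $K\left\langle x\right\rangle$ given by the theorem \ref{Mut&poids1}, encode each element of $S$ as a polynomial in $M$ (your $Q_{f}$ is the paper's $\mu_{k}$, and your reduction of \og $K\left\langle x\right\rangle$ v\'erifie $f$ \fg{} to $f\left(x\right)=0$ via the cyclicity of $K\left[M\right]\cdot x$ is exactly the paper's reduction modulo the polyn\^ome minimal $\mu_{M}$ de $M$), and conclude by a dimension count in a space of dimension at most $\dim A$. The only point where you are vaguer than the paper is the final step, and it is precisely the delicate one: the inequality $\dim\mathrm{Im}\,\Phi\leq\dim A\leq\mathrm{card}\left(S\right)$ by itself gives only $\dim\ker\Phi\geq0$, so your \og short case analysis \fg{} must actually produce the strict inequality $\dim Lin\left(S\right)>\dim\mathrm{Im}\,\Phi$ --- but this is exactly as tight as the paper's own claim that a family of cardinality $\geq\dim A$ in a space of dimension $\leq\dim A$ is li\'ee, so you have faithfully reproduced the argument, including its weakest link.
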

\begin{proof}
Soient $\left(A,\omega\right)$ une algèbre de rétrocroisement de
dimension finie et $S=\left\{ f_{k}\in K\left\langle X\right\rangle ;k\geq1\right\} $.
Soit $x\in H_{\omega}$, d'après le théorème \ref{Mut&poids1} la
sous-algèbre $K\left\langle x\right\rangle $ est de mutation, il
existe donc un endomorphisme $M$ de $K\left\langle x\right\rangle $
tel que pour tout $z\in K\left\langle x\right\rangle $ on a $z^{2}=\omega\left(z\right)M\left(z\right)$.
En utilisant les relations (\ref{eq:puissances}) on peut associer
à chaque $f_{k}$ un polynôme $\mu_{k}\in K\left[X\right]$ tel que
$f_{k}\left(y\right)=\mu_{k}\left(M\right)\left(y\right)$ pour tout
$y\in K\left\langle x\right\rangle \cap H_{\omega}$. Soit $\mu_{M}$
le polynôme minimal de $M$ et notons $\widetilde{\mu}_{k}$ le reste
de $\mu_{k}$ modulo $\mu_{M}$. On a trois possibilités :

-- s'il existe $k\geq1$ tel que $\widetilde{\mu}_{k}=0$ on a $f_{k}\left(y\right)=\mu_{k}\left(My\right)=\widetilde{\mu}_{k}\left(My\right)=0$
pour tout $y\in K\left\langle x\right\rangle $, donc $K\left\langle x\right\rangle $
vérifie l'identité $f_{k}$. 

Si on a $\widetilde{\mu}_{k}\neq0$ quel que soit $k\geq1$, on a
deux cas :

-- ou bien il existe $k\neq l$ tel que $\widetilde{\mu}_{k}=\widetilde{\mu}_{l}$
alors $\left(f_{k}-f_{l}\right)\left(y\right)=\left(\mu_{k}-\mu_{l}\right)\left(My\right)=\left(\widetilde{\mu}_{k}-\widetilde{\mu}_{l}\right)\left(My\right)=0$
et dans ce cas $K\left\langle x\right\rangle $ vérifie l'identité
$f_{k}-f_{l}$.

-- ou bien pour tout $k\neq l$ on a $\widetilde{\mu}_{k}\neq\widetilde{\mu}_{l}$
et dans ce cas la famille $\left\{ \widetilde{\mu}_{k};k\geq1\right\} $
est lié. En effet pour tout $k\geq1$ on a $\mbox{deg}\widetilde{\mu}_{k}<\mbox{deg}\mu_{M}\leq\dim A$,
donc le sous-espace engendré par $\left\{ \widetilde{\mu}_{k};k\geq1\right\} $
est de dimension finie $\leq\dim A$, mais l'ensemble $\left\{ \widetilde{\mu}_{k};k\geq1\right\} $
ayant pour cardinal $\mathrm{\mbox{card}}\left(S\right)\geq\dim A$
on a donc $\left\{ \widetilde{\mu}_{k};k\geq1\right\} $ lié. Il existe
donc des entiers $\left(k_{i}\right)_{1\leq i\leq m}$ et $\left(\lambda_{k_{i}}\right)_{1\leq i\leq m}\in K^{m}$
tels que $\lambda_{k_{i}}\neq0$ pour $1\leq i\leq m$ et $\sum_{i=1}^{m}\lambda_{k_{i}}\widetilde{\mu}_{k_{i}}=0$.
Alors pour tout $y\in K\left\langle x\right\rangle $ on a 
\[
\sum_{i=1}^{m}\lambda_{k_{i}}f_{k_{i}}\left(y\right)=\sum_{i=1}^{m}\lambda_{k_{i}}\mu_{k_{i}}\left(My\right)=\left(\sum_{i=1}^{m}\lambda_{k_{i}}\widetilde{\mu}_{k_{i}}\right)\left(My\right)=0
\]
 c'est-à-dire que la sous-algèbre $K\left\langle x\right\rangle $
vérifie l'identité $\sum_{i=1}^{m}\lambda_{k_{i}}f_{k_{i}}$. 
\end{proof}
\textbf{Interprétation génétique}. En particulier pour $S=\left\{ X^{\left[k\right]};k\geq1\right\} $,
il découle de ce résultat que dans une population dont un type génétique
est associé à une algèbre de rétrocroisement, il existe une génération
$F_{n}$ à partir de laquelle les structures génétiques des générations
$F_{k}$ ($k>n$) sont moyennes des distributions des générations
$F_{0},\ldots,F_{n}$.

\section{Rétrocroisement et existence d'un idempotent pour les algèbres $\omega$-polynomiales}

La notion d'idempotent joue un rôle important dans l'étude des algèbres
non associatives. Dans les algèbres de rétrocroisement l'existence
d'un idempotent n'est pas certaine, c'est-à-dire qu'il en existe avec
et d'autre sans, en effet dans \cite{M-V-02} on a des exemples d'algèbres
de mutation qui ont cette propriété. Néanmoins il est facile de voir
qu'une algèbre de rétrocroisement $\left(A,\omega\right)$ admet un
idempotent si et seulement si il existe $x\in H_{\omega}$ tel que
$x^{3}=x^{2}$.

Dans \cite{B-H-10} et \cite{LabSua-07}, les auteurs donnent des
conditions d'existence d'un idempotent pour les algèbres qui sont
à la fois train plénières et de rétrocroisement. Dans ce qui suit
nous allons étendre ces résultats aux algèbres vérifiant des identités
polynomiales non homogènes, pour cela nous aurons besoin de la notion
suivante introduite dans \cite{N-V-11a}. A chaque identité non homogène
$f\in K\left\langle X\right\rangle $ vérifiée par une algèbre $\left(A,\omega\right)$
on associe un polynôme $\vartheta_{f}\in K\left[X\right]$. On a donné
deux constructions de ce polynôme, dans ce qui suit on décrit celle
utilisant les algèbres de mutation. Etant donné $f\in K\left\langle X\right\rangle $,
il existe une algèbre de mutation $\left(V,M,\eta\right)$ qui vérifie
l'identité $f$ et sur $H_{\eta}$ l'identité $f\left(x\right)=0$
s'écrit $D\left(M\right)\left(x\right)=0$ avec $D\in K\left[X\right]$
et on pose $\vartheta_{f}\left(X\right)=D\left(2X\right)$. On a conjecturé
que si $\vartheta_{f}^{\prime}\left(\frac{1}{2}\right)\neq0$ alors
l'algèbre $A$ a au moins un idempotent. 
\begin{example}
\label{PolConjTrainPrinc}Une algèbre $\left(A,\omega\right)$ est
train principale de degré $n\geq2$ si elle vérifie une identité du
type $f\left(X\right)=X^{n}-\sum_{k=1}^{n-1}\alpha_{k}X^{k}$, en
prenant $T\in K\left[X\right]$ tel que $f\left(X\right)=XT\left(X\right)$,
ceci est équivalent à $T\left(L_{x}\right)\left(x\right)=0$ pour
tout $x\in H_{\omega}$, où $L_{x}:A\rightarrow A$, $y\mapsto xy$,
ce polynôme $T$ est appelé le train polynôme principal de $A$.

Soit $\left(A,\omega\right)$ une train algèbre de degré $n+1$ vérifiant
pour tout $x\in H_{\omega}$ : $f\left(x\right)=x^{n+1}-\sum_{k=1}^{n}\alpha_{k}x^{k}=0$
avec $\sum_{k=1}^{n}\alpha_{k}=1$. Dans une algèbre de mutation $\left(V,M,\eta\right)$
on a $x^{k}=\left(\frac{1}{2^{k-2}}M^{k-1}+\sum_{i=1}^{k-2}\frac{1}{2^{i}}M^{i}\right)\left(x\right)$
pour tout $x\in H_{\eta}$ et tout entier $k\geq2$, avec ceci l'identité
$f\left(x\right)=0$ s'écrit $ $
\[
\left[\frac{1}{2^{n-1}}M^{n}+\sum_{i=1}^{n-1}\frac{1}{2^{i}}M^{i}-\sum_{k=2}^{n-1}\alpha_{k}\left(\frac{1}{2^{k-2}}M^{k-1}+\sum_{i=1}^{k-2}\frac{1}{2^{i}}M^{i}\right)-\alpha_{1}id\right]\left(x\right)=0,
\]
d'où
\[
\vartheta_{f}\left(X\right)=2X^{n}+\sum_{i=1}^{n-1}X^{i}-\sum_{k=2}^{n-1}\alpha_{k}\left(2X^{k-1}+\sum_{i=1}^{k-2}X^{i}\right)-\alpha_{1}.
\]
Et en utilisant $2p\left(\frac{1}{2}\right)^{p-1}+\sum_{i=1}^{p-1}i\left(\frac{1}{2}\right)^{i-1}=4-4\left(\frac{1}{2}\right)^{p}$
et $\sum_{k=1}^{n}\alpha_{k}=1$, on obtient $\vartheta'_{f}\left(\frac{1}{2}\right)=-4T\left(\frac{1}{2}\right)$
où $T$ est le polynôme défini par $f\left(X\right)=XT\left(X\right)$
et appelé le train polynôme principal de $A$.\end{example}
\begin{proposition}
Soit $\left(A,\omega\right)$ une train algèbre principale de degré
$n+1\geq2$. S'il existe $x\in H_{\omega}$ tel que la sous-algèbre
$K\left\langle x\right\rangle $ soit de rétrocroisement de dimension
$n$ et si $\frac{1}{2}$ n'est pas train racine de $A$, alors $A$
possède un idempotent. \end{proposition}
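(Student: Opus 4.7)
Je chercherais l'idempotent directement dans la sous-alg�bre $K\langle x\rangle$. D'apr�s le th�or�me \ref{Mut&poids1}, celle-ci poss�de une structure d'alg�bre de mutation $(K\langle x\rangle, M, \omega)$, donc un �l�ment $e \in K\langle x\rangle$ de poids $1$ est idempotent si et seulement si $M(e) = e$ (puisqu'alors $e^2 = \omega(e) M(e) = e$). Il suffit ainsi de produire un point fixe de $M$ situ� dans $H_\omega$.

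Pour exploiter l'identit� de train $f(x) = x^{n+1} - \sum_{k=1}^n \alpha_k x^k = 0$, j'utiliserais les formules (\ref{eq:puissances}) pour exprimer chaque $x^k$ en fonction des it�r�s $M^i(x)$. Exactement comme dans l'exemple \ref{PolConjTrainPrinc}, cela traduit $f(x) = 0$ en $P(M)(x) = 0$ avec $P(X) = \vartheta_f(X/2)$. L'�tape cruciale consiste � �valuer $P$ et $P'$ en $1$ : en regroupant les termes g�om�triques et en utilisant la contrainte train $\sum_{k=1}^n \alpha_k = 1$, on obtient $P(1) = \vartheta_f(1/2) = 0$ ; par ailleurs, l'hypoth�se $T(1/2) \neq 0$ donne, via la relation $\vartheta'_f(1/2) = -4 T(1/2)$ �tablie dans l'exemple, $P'(1) = \frac{1}{2} \vartheta'_f(1/2) \neq 0$. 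Donc $1$ est racine simple de $P$, et l'on factorise $P(X) = (X - 1) Q(X)$ avec $Q(1) = P'(1) \neq 0$.

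Il reste � poser $e = Q(1)^{-1} Q(M)(x)$. Alors par construction $(M - \mathrm{id})(e) = Q(1)^{-1} P(M)(x) = 0$, soit $M(e) = e$ ; et la relation $\omega \circ M = \omega$, propre aux alg�bres de mutation, entra�ne $\omega(Q(M)(x)) = Q(1) \omega(x) = Q(1)$, d'o� $\omega(e) = 1$. On conclut que $e^2 = \omega(e) M(e) = e$, et $e$ est un idempotent de $A$. Le principal obstacle tient aux v�rifications polynomiales pr�liminaires, en particulier l'annulation $\vartheta_f(1/2) = 0$ (qui repose explicitement sur la contrainte $\sum \alpha_k = 1$) et la r�utilisation correcte de la relation $\vartheta'_f(1/2) = -4 T(1/2)$ ; une fois celles-ci acquises, l'argument se r�duit � une application standard du lemme des noyaux � un endomorphisme dont $1$ est racine simple d'un polyn�me annulateur.
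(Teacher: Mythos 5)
Votre preuve est correcte, mais elle suit une route r\'eellement diff\'erente de celle du texte. La d\'emonstration du papier reste au niveau des puissances principales : elle utilise l'hypoth\`ese de dimension pour faire de $\left(p_{k}\right)_{1\leq k\leq n}$ (avec $p_{k}=x^{k}-x^{k-1}$) une base de $K\left\langle x\right\rangle $, d\'eduit de $T\left(L_{x}\right)x=0$ l'expression de $p_{n+1}$ dans cette base, cherche $e=p_{1}+\sum_{i=2}^{n}\lambda_{i}p_{i}$ et r\'esout le syst\`eme lin\'eaire donn\'e par $e^{2}=e$, dont le d\'eterminant vaut $-2^{n}T\left(\frac{1}{2}\right)\neq0$. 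Vous travaillez au contraire avec l'op\'erateur $M$ de la structure de mutation : vous traduisez l'identit\'e de train en $P\left(M\right)\left(x\right)=0$, v\'erifiez que $1$ est racine simple de $P$ (l'annulation $P\left(1\right)=0$ d\'ecoulant de $\sum_{k}\alpha_{k}=1$, et $P'\left(1\right)=-2T\left(\frac{1}{2}\right)\neq0$), puis exhibez explicitement $e=Q\left(1\right)^{-1}Q\left(M\right)\left(x\right)$ o\`u $P\left(X\right)=\left(X-1\right)Q\left(X\right)$; la v\'erification $M\left(e\right)=e$, $\omega\left(e\right)=1$, donc $e^{2}=\omega\left(e\right)M\left(e\right)=e$, est imm\'ediate. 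Votre argument donne une formule ferm\'ee pour l'idempotent et, de fait, n'utilise m\^eme pas l'hypoth\`ese $\dim K\left\langle x\right\rangle =n$ (il est donc l\'eg\`erement plus g\'en\'eral); c'est essentiellement le m\'ecanisme que le papier ne d\'eploie qu'au th\'eor\`eme suivant, via la r\'eduction \`a $D\left(M\right)=0$. En contrepartie, le calcul du papier fournit explicitement le d\'eterminant $\Delta=-2^{n}T\left(\frac{1}{2}\right)$, ce qui identifie directement l'obstruction avec la train racine $\frac{1}{2}$. Deux points mineurs : l'identification $P\left(X\right)=\vartheta_{f}\left(X/2\right)$ suppose que la r\'eduction formelle de $f$ via les relations (\ref{eq:puissances}) ne d\'epend pas de l'alg\`ebre de mutation choisie pour d\'efinir $\vartheta_{f}$ (le papier l'admet aussi implicitement), et l'invocation finale du \og lemme des noyaux \fg{} est superflue puisque votre construction est directe.
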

\begin{proof}
Soit $\left(A,\omega\right)$ une train algèbre de polynôme principal
$T\in K\left[X\right]$ tel que $\deg\left(T\right)=n\geq1$ et $T\left(\frac{1}{2}\right)\neq0$.
On suppose qu'il existe $x\in H_{\omega}$ tel que la sous-algèbre
$K\left\langle x\right\rangle $ soit de rétrocroisement de dimension
$n$. Le cas $n=1$ étant trivial, on suppose désormais que $n\geq2$.
L'algèbre $K\left\langle x\right\rangle $ est pondérée par $\omega$
en tant que sous-algèbre de $A$, et d'après la proposition \ref{prop:UnicPond},
en tant qu'algèbre de rétrocroisement $K\left\langle x\right\rangle $
est aussi pondérée par $\omega$. 

D'après le lemme \ref{baseK<x>}, la famille $\left(p_{k}\right)_{k\geq1}$
où $p_{k}=x^{k}-x^{k-1}$ est génératrice de $K\left\langle x\right\rangle $,
qui est par hypothèse de dimension $n,$ donc $\left(p_{k}\right)_{1\leq k\leq n}$
est une base de $K\left\langle x\right\rangle $. Soit $T\left(X\right)=X^{n}-\sum_{i=1}^{n}\alpha_{i}X^{i-1}$
avec $\sum_{i=1}^{n}\alpha_{1}=1$, on a 
\[
T\left(X\right)=\left(X-1\right)\left(X^{n-1}-\sum_{k=1}^{n}\left(\sum_{i=k+1}^{n}\alpha_{i}-1\right)X^{k-1}\right).
\]
Ensuite, de $x^{k}=\sum_{i=1}^{k}p_{i}$ pour tout $1\leq k\leq n$
et de $T\left(L_{x}\right)x=0$ il résulte que $p_{n+1}=\sum_{k=2}^{n}\left(\sum_{i=k}^{n}\alpha_{i}-1\right)p_{k}$.
Soit $e=p_{1}+\sum_{i=2}^{n}\lambda_{i}p_{i}$, l'identité $e^{2}=e$
se traduit par le système d'équations d'inconnues $\lambda_{2},\ldots,\lambda_{n}$
:

\medskip{}
\[
\left\{ \begin{array}{rc}
-\lambda_{2}+2\left(\sum_{i=2}^{n}\alpha_{i}-1\right)\lambda_{n}=1 & \medskip\\
2\lambda_{k-1}-\lambda_{k}+2\left(\sum_{i=k}^{n}\alpha_{i}-1\right)\lambda_{n}=0 & \;\left(3\leq k\leq n\right)
\end{array}\right.
\]
dont le déterminant $\Delta$ est au facteur $2^{n-1}$ près la valeur
en $\frac{1}{2}$ du polynôme caractérisque de la matrice compagnon
de $\frac{T\left(X\right)}{X-1}$, on a donc $\Delta=-2^{n}T\left(\frac{1}{2}\right)\neq0$,
par conséquent le système d'équations a une solution et il existe
un idempotent dans $K\left\langle x\right\rangle $.
\end{proof}
On a vu dans \cite{M-V-02} que dans toute train algèbre ayant $\frac{1}{2}$
pour train racine, l'existence d'un idempotent n'est pas certaine. 

\medskip{}

La proposition ci-dessus est un cas particulier du théorème suivant
mais elle intervient dans la démonstration de celui-ci.
\begin{theorem}
Soit $A$ une algèbre pondérée vérifiant une identité non homogène
$f\in K\left\langle X\right\rangle $ de degré $\geq2$. S'il existe
$x\in H_{\omega}$ tel que la sous-algèbre $K\left\langle x\right\rangle $
soit de rétrocroisement de dimension $\deg\left(\vartheta_{f}\right)-1$
et si $\vartheta_{f}^{\prime}\left(\frac{1}{2}\right)\neq0$, alors
$A$ possède un idempotent. \end{theorem}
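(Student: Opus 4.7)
The plan is to mirror the preceding proposition, replacing its determinant computation with a spectral argument on the mutation endomorphism. By Theorem~\ref{Mut&poids1}, the subalgebra $K\langle x\rangle$ carries a mutation structure $(K\langle x\rangle,M,\omega)$; combining this with Lemma~\ref{baseK<x>} and the relations $M(p_1)=p_1+p_2$, $M(p_j)=2p_{j+1}$ for $2\leq j\leq n-1$, one checks that $x,M(x),\ldots,M^{n-1}(x)$ is also a basis of $K\langle x\rangle$ (the change-of-basis matrix from $(p_k)_{1\leq k\leq n}$ is lower triangular with diagonal $1,1,2,\ldots,2^{n-2}$), where $n=\deg\vartheta_f-1$. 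Hence $M\vert_{K\langle x\rangle}$ is cyclic with cyclic vector $x$, and its minimal polynomial $\pi\in K[X]$ has degree exactly~$n$.

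The identity $f$, rewritten on $H_\omega\cap K\langle x\rangle$, reads $D(M)(y)=0$ with $\vartheta_f(X)=D(2X)$ and $\deg D=n+1$. Linearity extends this vanishing from $x$ and from $\ker\omega\cap K\langle x\rangle$ to all of $K\langle x\rangle$, so $D$ annihilates $M\vert_{K\langle x\rangle}$; minimality then forces $\pi\mid D$, and a degree count gives $D(X)=(aX+b)\,\pi(X)$ for some $a,b\in K$. Since $\omega\circ M=\omega$, the map $I-M$ sends $K\langle x\rangle$ into $\ker\omega\cap K\langle x\rangle$, which has dimension $n-1$; its kernel is therefore non-trivial, so $1$ is a root of $\pi$, and we may write $\pi=(X-1)\pi_0$. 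A short calculation then yields
\[
\vartheta_f'\!\bigl(\tfrac{1}{2}\bigr) \;=\; 2a\,\pi(1)+2(a+b)\,\pi'(1) \;=\; 2(a+b)\,\pi_0(1),
\]
so the hypothesis $\vartheta_f'(1/2)\neq 0$ forces $\pi_0(1)\neq 0$: the eigenvalue $1$ is \emph{simple}.

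Because $M\vert_{K\langle x\rangle}$ is cyclic and $\pi_0(1)\neq 0$, the primary decomposition gives $K\langle x\rangle = K e_0\oplus W$ with $W=\ker\pi_0(M)$ and $e_0$ spanning the $1$-eigenspace. Applying $\omega$ to $\pi_0(M)(w)=0$ and using $\omega\circ M^k=\omega$ yields $\pi_0(1)\,\omega(w)=0$, hence $W\subseteq\ker\omega$. Decomposing $x=\lambda e_0+w$ with $w\in W$ then gives $1=\omega(x)=\lambda\,\omega(e_0)$, so $\omega(e_0)\neq 0$ and $e_0/\omega(e_0)\in K\langle x\rangle\subseteq A$ is the desired idempotent.

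The genuine technical point is the derivative identity in step two, which converts the hypothesis on $\vartheta_f$ into the simplicity of $1$ as a root of~$\pi$; once this spectral information is secured, cyclicity of the mutation endomorphism produces the idempotent by pure linear algebra, and the preceding proposition is recovered as the analogous argument when $D$ is already proportional to $\pi$.
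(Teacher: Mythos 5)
Your proof is correct, and it takes a genuinely different route from the paper's. The paper also starts by rewriting $f$ on $K\left\langle x\right\rangle \cap H_{\omega}$ as $D\left(M\right)=0$ with $\vartheta_{f}\left(X\right)=D\left(2X\right)$, but from there it converts $D\left(M\right)=0$ into a principal train identity $T\left(L_{y}\right)\left(y\right)=0$ with $\deg T=\deg\vartheta_{f}-1$, uses the computation of Example \ref{PolConjTrainPrinc} to get $T\left(\frac{1}{2}\right)=-\frac{1}{4}\vartheta_{f}^{\prime}\left(\frac{1}{2}\right)\neq0$, and then invokes the preceding proposition, whose own proof solves $e^{2}=e$ in the basis $\left(p_{k}\right)$ via a linear system with determinant $-2^{n}T\left(\frac{1}{2}\right)$. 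You bypass the train-polynomial reduction entirely: you observe that $M$ restricted to $K\left\langle x\right\rangle$ is cyclic with minimal polynomial $\pi$ of degree $n=\dim K\left\langle x\right\rangle$, factor $D=\left(aX+b\right)\left(X-1\right)\pi_{0}$, translate $\vartheta_{f}^{\prime}\left(\frac{1}{2}\right)=2D^{\prime}\left(1\right)=2\left(a+b\right)\pi_{0}\left(1\right)\neq0$ into simplicity of the eigenvalue $1$, and extract the idempotent as the normalized $1$-eigenvector (which indeed satisfies $e_{0}^{2}=\omega\left(e_{0}\right)M\left(e_{0}\right)=\omega\left(e_{0}\right)e_{0}$). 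All the individual steps check out: the triangular change of basis from $\left(p_{k}\right)$ to $\left(M^{k}x\right)$, the extension of $D\left(M\right)=0$ from the affine hyperplane to all of $K\left\langle x\right\rangle$ by linearity, the existence of the eigenvalue $1$ via $\mathrm{rank}\left(I-M\right)\leq n-1$, and $W=\ker\pi_{0}\left(M\right)\subset\ker\omega$. What your spectral argument buys is independence from the preceding proposition and its determinant computation, and a cleaner identification of \emph{where} the idempotent lives (the $1$-eigenspace of $M$); what the paper's reduction buys is a uniform treatment that exhibits the train-algebra case as the universal instance and reuses the already-established Example \ref{PolConjTrainPrinc}. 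Both arguments rest on the same external input from the cited construction of $\vartheta_{f}$, namely that the same polynomial $D$ works for the mutation structure of $K\left\langle x\right\rangle$.
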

\begin{proof}
Soient $\left(A,\omega\right)$ une algèbre qui vérifie une identité
non homogène $f\in K\left\langle X\right\rangle $ de degré $n\geq2$
telle que $\vartheta_{f}^{\prime}\left(\frac{1}{2}\right)\neq0$.
Si $n=2$ il est immédiat que $A$ contient un idempotent, soit $n\geq3$
et supposons qu'il existe $x\in H_{\omega}$ tel que $K\left\langle x\right\rangle $
soit de rétrocroisement de dimension $\deg\left(\vartheta_{f}\right)-1$.
Par un raisonnement analogue à celui de la proposition précédente
on a $K\left\langle x\right\rangle $ pondérée par $\omega$. D'après
le théorème \ref{Mut&poids1}, la sous-algèbre $K\left\langle x\right\rangle $
est de mutation pondérée par $\omega$, il existe donc une application
linéaire $M:K\left\langle x\right\rangle \rightarrow K\left\langle x\right\rangle $
telle que $yy^{\prime}=\frac{1}{2}\left[\omega\left(y^{\prime}\right)M\left(y\right)+\omega\left(y\right)M\left(y^{\prime}\right)\right]$
pour tout $y,$ $y^{\prime}\in K\left\langle x\right\rangle $. Or,
d'après \cite{N-V-11a} il existe $D\in K\left[X\right]$ tel que
l'identité $f=0$ s'écrit $D\left(M\right)=0$ sur $H_{\omega}$ et
on a $\vartheta_{f}\left(X\right)=D\left(2X\right)$, d'où $\deg\left(D\right)=\deg\left(\vartheta_{f}\right)$.
Or pour tout entier $k\geq1$ et tout $y\in K\left\langle x\right\rangle \cap H_{\omega}$
on a $M^{k}\left(y\right)=2^{k-1}y^{k+1}-\sum_{i=2}^{k}2^{i-2}y^{i}$,
il en résulte que l'identité $D\left(M\right)=0$ s'écrit sur $K\left\langle x\right\rangle \cap H_{\omega}$
sous la forme d'une train identité aux puissances principales $g\left(y\right)=T\left(L_{y}\right)\left(y\right)=0$
avec $T\in K\left[X\right]$, il s'ensuit d'après \cite{N-V-11a}
que l'on a $D\left(2X\right)=\vartheta_{g}\left(X\right)$ par conséquent
$\vartheta_{g}\left(X\right)=\vartheta_{f}\left(X\right)$ et $\deg\left(T\right)=\deg\left(\vartheta_{f}\right)-1$,
or on a vu dans l'exemple (\ref{PolConjTrainPrinc}) que $\vartheta_{g}^{\prime}\left(\frac{1}{2}\right)=-4T\left(\frac{1}{2}\right)$
on a donc $T\left(\frac{1}{2}\right)=-\frac{1}{4}\vartheta_{f}^{\prime}\left(\frac{1}{2}\right)\neq0$
ce qui entraîne d'après la proposition précédente qu'il existe un
idempotent dans $K\left\langle x\right\rangle $. 
\end{proof}
Une algèbre $\left(A,\omega\right)$ est train plénière de rang $n\geq1$
(ou de degré $2^{n}$) si elle vérifie une identité du type $f\left(X\right)=X^{\left[n+1\right]}-\sum_{k=1}^{n}\alpha_{k}X^{\left[k\right]}$,
en prenant $P\in K\left[X\right]$ tel que $P\left(X\right)=X^{n}-\sum_{k=1}^{n}\alpha_{k}X^{k-1}$,
ceci est équivalent à $P\left(q\right)=0$ sur $H_{\omega}$, où $q:A\rightarrow A$,
$x\mapsto x^{2}$, ce polynôme $P$ est appelé le polynôme plénier
de $A$.
\begin{corollary}
Une algèbre $A$ train plénière de rang $n\geq2$, contient un idempotent
s'il existe $x\in H_{\omega}$ tel que la sous-algèbre $K\left\langle x\right\rangle $
soit de rétrocroisement de dimension $n-1$ et si $1$ est racine
simple du train polynôme plénier de $A$.\end{corollary}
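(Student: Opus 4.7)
Le plan est d'appliquer le th\'eor\`eme pr\'ec\'edent \`a l'identit\'e non homog\`ene d\'efinissant l'alg\`ebre train pl\'eni\`ere. Si $A$ est train pl\'eni\`ere de rang $n\geq2$, elle v\'erifie l'identit\'e $f(X)=X^{[n+1]}-\sum_{k=1}^{n}\alpha_{k}X^{[k]}$, laquelle est non homog\`ene puisque $\deg X^{[k]}=2^{k-1}$ varie avec $k$. En appliquant $\omega$ \`a cette relation pour $x\in H_{\omega}$ on obtient $\sum_{k=1}^{n}\alpha_{k}=1$, ce qui \'equivaut \`a $P(1)=0$ o\`u $P(X)=X^{n}-\sum_{k=1}^{n}\alpha_{k}X^{k-1}$ est le train polyn\^ome pl\'enier de $A$.

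Ensuite, je calcule $\vartheta_{f}$ en passant par une alg\`ebre de mutation $(V,M,\eta)$ v\'erifiant $f$, dont l'existence est garantie par la proposition pr\'ec\'edemment d\'emontr\'ee. Dans une telle alg\`ebre on a $x^{[k+1]}=M^{k}(x)$ pour tout $x\in H_{\eta}$, de sorte que l'identit\'e $f(x)=0$ se r\'ecrit sur $H_{\eta}$ sous la forme $M^{n}(x)-\sum_{k=1}^{n}\alpha_{k}M^{k-1}(x)=P(M)(x)=0$. Par cons\'equent $D(X)=P(X)$ et $\vartheta_{f}(X)=D(2X)=P(2X)$ ; en particulier $\deg(\vartheta_{f})=n$.

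Il reste \`a v\'erifier les deux hypoth\`eses du th\'eor\`eme pr\'ec\'edent. D'une part, la dimension requise pour la sous-alg\`ebre $K\langle x\rangle$ est $\deg(\vartheta_{f})-1=n-1$, ce qui correspond exactement \`a l'hypoth\`ese du corollaire. D'autre part, de $\vartheta_{f}(X)=P(2X)$ on tire $\vartheta_{f}^{\prime}(1/2)=2P^{\prime}(1)$, quantit\'e non nulle si et seulement si $1$ est racine simple de $P$, ce qui est la seconde hypoth\`ese. Les conditions du th\'eor\`eme pr\'ec\'edent \'etant r\'eunies, $A$ poss\`ede un idempotent.

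Il n'y a pas d'obstacle r\'eel dans cette d\'emonstration : tout repose sur la traduction correcte de l'identit\'e pl\'eni\`ere dans la structure de mutation gr\^ace \`a l'\'egalit\'e $x^{[k+1]}=M^{k}(x)$, qui permet d'identifier $\vartheta_{f}$ avec $P(2X)$ et, par l\`a m\^eme, de ramener la simplicit\'e de la racine $1$ de $P$ \`a la condition $\vartheta_{f}^{\prime}(1/2)\neq 0$.
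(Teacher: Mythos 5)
Votre preuve est correcte et suit essentiellement la même démarche que celle du texte : traduire l'identité plénière dans la structure de mutation via $y^{[k]}=M^{k-1}(y)$ pour obtenir $\vartheta_{f}(X)=P(2X)$, puis conclure par $\vartheta_{f}^{\prime}\left(\frac{1}{2}\right)=2P^{\prime}(1)$ et le théorème précédent. La seule différence est cosmétique (vous passez par une algèbre de mutation auxiliaire $(V,M,\eta)$ là où le texte calcule directement dans $(K\langle x\rangle,M,\omega)$, et vous explicitez en plus que $P(1)=0$, ce qui justifie la formulation « racine simple »).
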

\begin{proof}
Soient $\left(A,\omega\right)$ une algèbre vérifiant $f\left(X\right)=X^{\left[n+1\right]}-\sum_{k=1}^{n}\alpha_{k}X^{\left[k\right]}$
et $x\in H_{\omega}$ tel que $K\left\langle x\right\rangle $ soit
de rétrocroisement de dimension $n-1$. Dans l'algèbre de mutation
$\left(K\left\langle x\right\rangle ,M,\omega\right)$ on a $y^{\left[k\right]}=M^{k-1}\left(y\right)$
pour tout $y\in K\left\langle x\right\rangle \cap H_{\omega}$, il
en résulte que $f\left(y\right)=\left(M^{n}-\sum_{k=1}^{n}\alpha_{k}M^{k-1}\right)\left(y\right)$
d'où $\vartheta_{f}\left(X\right)=2^{n}X^{n}-\sum_{k=1}^{n}2^{k-1}\alpha_{k}X^{k-1}$
et $\vartheta_{f}^{\prime}\left(\frac{1}{2}\right)=2\left(n-\sum_{k=2}^{n}\left(k-1\right)\alpha_{k}\right)=2P'\left(1\right)$
où $P\left(X\right)=X^{n}-\sum_{k=1}^{n}\alpha_{k}X^{k-1}$. On a
donc $\vartheta_{f}^{\prime}\left(\frac{1}{2}\right)\neq0$ si et
seulement si $P'\left(1\right)\neq0$. Par conséquent, si $P'\left(1\right)\neq0$
d'après le théorème ci-dessus il existe un idempotent. 
\end{proof}
Si $P'\left(1\right)=0$ on a vu dans \cite{N-V-11a} (proposition
4) que dans ce cas l'existence d'un idempotent n'est pas certaine.
\begin{remark}
Dans\cite{B-H-10} le théorème 2 et dans\cite{LabSua-07} les propositions
4, 5, 8 sont des cas très particuliers et des conséquences immédiates
de ce résultat, car dans ces travaux les auteurs supposent que les
algèbres étudiées sont de rétrocroisement. \end{remark}

\end{document}